\def\@setcopyright{}
\newtheorem{lemma}{Lemma}[section]
\newtheorem{proposition}[lemma]{Proposition}
\newtheorem{theorem}[lemma]{Theorem}
\newtheorem{remark}[lemma]{Remark}
\newtheorem*{acknowledgments*}{ACKNOWLEDGMENTS}
\newtheorem{definition}[lemma]{Definition}
\newtheorem{example}[lemma]{Example}
\newtheorem*{conj*}{Conjecture}
\newtheorem*{thm*}{Main Theorem}
\newtheorem*{remark*}{Remark}
\newtheorem*{lemma*}{Lemma}
\newtheorem*{example*}{[Example]}
\begin{document}
\begin{center}
{\Large \bf A bound for Hall's criterion for nilpotence in semi-abelian categories
}\\

\bigskip
\end{center}

\begin{center}
Heguo Liu$^{1}$, Xingzhong Xu$^{1}$, Jiping Zhang$^{2}$

\end{center}

\footnotetext {$^{*}$~~Date: May/21/2019.
}
\footnotetext {
1. Department of Mathematics, Hubei University, Wuhan, 430062, China

2. School of Mathematical Sciences, Peking University, Beijing, 100871, China

Heguo Liu's E-mail: ghliu@hubu.edu.cn

Xingzhong Xu's E-mail: xuxingzhong407@hubu.edu.cn; xuxingzhong407@126.com

Jiping Zhang's E-mail:  jzhang@pku.edu.cn

Supported by National 973 Project (2011CB808003) and NSFC grant (11371124, 11501183).
}

\title{}
\def\abstractname{\textbf{Abstract}}

\begin{abstract}\addcontentsline{toc}{section}{\bf{English Abstract}}
In this paper, we focus on Hall's criterion for nilpotence in semi-abelian categories,
and we improve the bound of the main theorem of \cite[Theorem 3.4]{Gr}
(see Main Theorem). And this bound is best possible.
\hfil\break

\textbf{Key Words:} Hall's criterion for nilpotence; Semi-abelian.
\hfil\break \textbf{2000 Mathematics Subject
Classification:} \  $\cdot$ \  $\cdot$ \

\end{abstract}

\maketitle

\section{\bf Introduction}

In \cite{Gr}, Gray has proved a wide generalization of P. Hall's theorem about nilpotent groups:
a group $G$ is nilpotent if it has a normal subgroup $N$ such that $G/[N,N]$ and $N$ is nilpotent.
Gray's generalization is in a semi-abelian category \cite{JMT} which satisfies some properties\cite[Section 3]{Gr}.
Moreover, Gray's main theorem gives a  bound  of  the nilpotency class about the similar objects in  
algebraically coherent semi-abelian category (see \cite[Theorem 3.4]{Gr}). 
In this note, we improve the bound as follows.

\begin{thm*} Let $\mathbb{C}$ be an algebraically coherent semi-abelian category and let
$p: E\to B$ be an extension of a nilpotent object $B$ in $\mathbb{C}$. If
the kernel of $p$ is contained in the Huq commutator $[N, N]_N$ of a nilpotent normal
subobject $N$ of $E$, and if $N$ is of nilpotency class $c$ and $B$ is of nilpotency class $d$,
then $E$ is of nilpotency class at most $cd+(c-1)(d-1)$.
\end{thm*}

Here, the definition of algebraically coherent semi-abelian category can be found in \cite{Gr}.
Examples of algebraically coherent semi-abelian categories include the categories of groups, rings,
Lie algebra over a commutative ring, and others categories in \cite{Or}.
And the bound in the categories of groups is found by \cite[Theorem 1.]{S}.

$Structure~ of ~ the~ paper:$
After recalling the basic definitions and properties of commutator semi-lattices in Section 2, and
we introduce semi-abelian categories and commutators in Section 3. In Section 4,
we prove Main Theorem.

\section{\bf Jacobi commutator semi-lattices}

In this section we collect some known results about commutator semi-lattices.  For the background theory of commutator semi-lattices, we refer to \cite{Gr}.

First, let us begin with semi-lattices.
\begin{definition} A $\mathrm{semi}$-$\mathrm{lattice}$ is a
triple $(X, \leq, \vee)$ where $(X, \leq)$ is a poset, and $\vee$ is a binary
operation on $X$ satisfying:

(a) for each $a\in X$, $a\vee a=a$;

(c) for each $a, b \in X, a\vee b = b\vee a$;

(d) for each $a, b, c \in X, (a\vee b)\vee c=a\vee (b\vee c)$.

Moreover, a semi-lattice $(X, \leq, \vee)$ is called $\mathrm{join}$ if
$$a\leq b\Longleftrightarrow a\vee b = b$$
for each $a, b\in X$.
\end{definition}

\begin{definition} A $\mathrm{commutator~semi}$-$\mathrm{lattice}$ is a
triple $(X, \leq, \cdot)$ where $X$ is a set, $\leq$ is a binary relation on $X$, and $\cdot$ is a binary
operation on $X$ satisfying:

(a) $(X, \leq)$ is a join semi-lattice;

(b) the operation $\cdot$ is commutative;

(c) for each $a, b \in X, a\cdot b \leq b$;

(d) for each $a, b, c \in X, a \cdot (b \vee c) = (a \cdot b) \vee (a \cdot c)$.

\end{definition}

\begin{remark} Let $(X, \leq, \cdot)$ be a commutator semi-lattice, and $x\in X$.
Then the map $x\cdot-: X\to X$ defined by $y\mapsto x\cdot y$ is order preserving.
\end{remark}

\begin{proof} Let $y\leq z\in X$, then $y\vee z=z$.
Since $(x\cdot y)\vee (x\cdot z)=x\cdot (y\vee z)$ by above definition (d),
we have $(x\cdot y)\vee (x\cdot z)=x\cdot (y\vee z)=x\cdot z$.
Hence, $x\cdot y\leq x\cdot z$.
\end{proof}

\begin{definition} A commutator semi-lattice $(X, \leq, \cdot)$ is a $\mathrm{Jacobi}$ commutator
semi-lattice if

(a) for each  $a, b, c \in X,  a \cdot (b \cdot c) \leq ((a \cdot b) \cdot c) \vee (b \cdot (a \cdot c))$;
\end{definition}

\begin{example} Let $G$ be a group, and let $X$ be the set of all normal subgroups of $G$.
For each $M, N\in X$, we can define that $N\cdot M=[M,N]$ and $N\vee M=NM$.
It is easy to see that $(X, \leq, \cdot)$ is a Jacobi commutator semi-lattice.
\end{example}

\begin{definition} A derivation of a commutator semi-lattice $(X, \leq, \cdot)$ is a map
$f: X\to X$ which preserves joins and satisfies:

(a) for each $a, b \in X, f(a \cdot b) \leq (f(a) \cdot b) \vee (a \cdot f(b))$.

A derivation $f$ of a commutator semi-lattice $(X, \leq, \cdot)$ is an inner derivation if
there exists $x$ in $X$ such that $f = x \cdot -$, that is, for each $a$ in $X, f(a) = x \cdot a$.
\end{definition}

\begin{remark} Let $f$ be a derivation of commutator semi-lattice $(X, \leq, \cdot)$.
For each $a, b$ in $X$ and $a\leq b$, then $f(a)\leq f(b)$.
\end{remark}

\begin{proof} Since $a\leq b$, we have $a\vee b=b$. Also $f$ is a derivation, thus $f$ preserves
joins. Hence, $f(b)=f(a\vee b)= f(a)\vee f(b)$. So $f(a)\leq f(b)$.
\end{proof}

\begin{proposition}Let $g$ be the inner derivation of a Jacobi commutator semi-lattice $(X, \leq, \cdot)$,
let $x$ be an elements of $X$, and let $g$ be
defined for each $s$ in $X$ by $g(s)=x\cdot s$. Then
$$g^i(x)\cdot g^j(x)\leq g^{i+j+1}(x)$$
for each each non-negative integers $i$ and $j$.
\end{proposition}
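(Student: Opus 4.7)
The plan is to prove this by induction on $i$, with $j$ allowed to range freely over non-negative integers. The Jacobi identity is designed precisely so that it reduces $g^i(x)\cdot g^j(x)$ to expressions involving $g^{i-1}(x)$, whose behavior is controlled by the induction hypothesis.

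For the base case $i=0$, one has $g^{0}(x)\cdot g^{j}(x)=x\cdot g^{j}(x)=g^{j+1}(x)$ by the very definition of the inner derivation, which gives the required inequality with equality. For the inductive step, assume the claim for $i-1$ and every $j\geq 0$. Rewrite
\[
g^{i}(x)\cdot g^{j}(x)=\bigl(x\cdot g^{i-1}(x)\bigr)\cdot g^{j}(x)=g^{j}(x)\cdot\bigl(x\cdot g^{i-1}(x)\bigr),
\]
where the second equality uses the commutativity axiom (b) of a commutator semi-lattice. Now apply the Jacobi axiom with $a=g^{j}(x)$, $b=x$, $c=g^{i-1}(x)$ to obtain
\[
g^{j}(x)\cdot\bigl(x\cdot g^{i-1}(x)\bigr)\leq\bigl((g^{j}(x)\cdot x)\cdot g^{i-1}(x)\bigr)\vee\bigl(x\cdot(g^{j}(x)\cdot g^{i-1}(x))\bigr).
\]

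It then remains to bound each of the two joinands by $g^{i+j+1}(x)$. For the first joinand, $g^{j}(x)\cdot x=x\cdot g^{j}(x)=g^{j+1}(x)$, so the term becomes $g^{i-1}(x)\cdot g^{j+1}(x)$ (after another use of commutativity), and the induction hypothesis at the pair $(i-1,j+1)$ bounds it by $g^{(i-1)+(j+1)+1}(x)=g^{i+j+1}(x)$. For the second joinand, the induction hypothesis at the pair $(i-1,j)$ yields $g^{j}(x)\cdot g^{i-1}(x)\leq g^{i+j}(x)$, after which the fact from the Remark following Definition of commutator semi-lattice that $x\cdot-$ is order-preserving upgrades this to $x\cdot(g^{j}(x)\cdot g^{i-1}(x))\leq x\cdot g^{i+j}(x)=g^{i+j+1}(x)$. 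Since the join of two elements both $\leq g^{i+j+1}(x)$ lies below $g^{i+j+1}(x)$ in a join semi-lattice, the induction is complete.

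No step looks genuinely difficult; the entire argument is a careful bookkeeping of indices. The only place where one might trip up is ensuring the induction hypothesis is formulated for all $j$ simultaneously (not for a fixed pair $(i,j)$), since the first joinand pushes $j$ up to $j+1$ while dropping $i$ to $i-1$. Once that is recognized, the commutativity of $\cdot$, the order-preservation of inner derivations, and the Jacobi inequality combine mechanically to give the claim.
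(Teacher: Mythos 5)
Your proof is correct and is essentially the paper's own argument in mirror image: the paper inducts on $j$ and applies the Jacobi axiom to $g^i(x)\cdot(x\cdot g^{j-1}(x))$, while you induct on $i$ and peel the $x$ off the other factor, using the induction hypothesis at the two shifted index pairs in exactly the same way. Your explicit remark that the hypothesis must be quantified over all values of the non-inducted variable is a point the paper glosses over, but the underlying argument is the same.
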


\begin{proof} The proof is by induction on $j$. For $j=0$,
 we can see $g^i(x)\cdot g^0(x)=g^i(x)\cdot x= x \cdot g^i(x)= g^{i+1}(x)$.

We can see that
\begin{eqnarray*}
& ~&g^{i}(x)\cdot g^{j}(x)\\
&=& \underbrace{(x\cdot(x\cdot\cdots\cdot(x}_i\cdot x)))\cdot\underbrace{(x\cdot(x\cdot\cdots\cdot(x}_j\cdot x)))\\
&\leq & (\underbrace{(x\cdot(x\cdot\cdots\cdot(x}_i\cdot x)))\cdot x)\cdot\underbrace{(x\cdot(x\cdot\cdots\cdot(x}_{j-1}\cdot x)))\\
&~&\bigvee x\cdot(\underbrace{(x\cdot(x\cdot\cdots\cdot(x}_i\cdot x)))\cdot\underbrace{(x\cdot(x\cdot\cdots\cdot(x}_{j-1}\cdot x))))\\
&\leq& \underbrace{(x\cdot(x\cdot\cdots\cdot(x}_{i+1}\cdot x)))\cdot\underbrace{(x\cdot(x\cdot\cdots\cdot(x}_{j-1}\cdot x)))\\
&~&\bigvee x\cdot g^{i+j}(x)\\
&=& \underbrace{(x\cdot(x\cdot\cdots\cdot(x}_{i+1}\cdot x)))\cdot\underbrace{(x\cdot(x\cdot\cdots\cdot(x}_{j-1}\cdot x)))\bigvee g^{i+j+1}(x)\\
&=& g^{i+1}(x)\cdot g^{j-1}(x)\bigvee g^{i+j+1}(x)\\
&\leq & g^{i+j+1}(x)\bigvee g^{i+j+1}(x)\\
&=& g^{i+j+1}(x).
\end{eqnarray*}
Hence, we get the proof.
\end{proof}

\begin{proposition} Let $f$ be a derivation of commutator semi-lattice $(X, \leq, \cdot)$.
For each $a, b$ in $X$ and for each non-negative integer $n$, we have
$$f^n(a\cdot b)\leq \bigvee_{i=0}^n f^i(a)\cdot f^{n-i}(b).$$
\end{proposition}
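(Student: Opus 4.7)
The plan is to prove the inequality by induction on $n$, using the two defining properties of a derivation (join-preservation and the Leibniz-type inequality) together with the order-preservation established in the earlier remarks.

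For the base case $n=0$, the claim reduces to $f^0(a\cdot b) = a\cdot b = f^0(a)\cdot f^0(b)$, which is immediate since the join on the right-hand side has only one summand.

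For the inductive step, assume the inequality holds for some $n$. Apply $f$ to both sides: by the remark that any derivation is order-preserving, we get
\[
f^{n+1}(a\cdot b) = f\bigl(f^n(a\cdot b)\bigr) \leq f\Bigl(\bigvee_{i=0}^{n} f^i(a)\cdot f^{n-i}(b)\Bigr).
\]
Since $f$ preserves joins, the right-hand side equals $\bigvee_{i=0}^{n} f\bigl(f^i(a)\cdot f^{n-i}(b)\bigr)$. Now invoke the Leibniz-type inequality for the derivation $f$ on each summand:
\[
f\bigl(f^i(a)\cdot f^{n-i}(b)\bigr) \leq \bigl(f^{i+1}(a)\cdot f^{n-i}(b)\bigr) \vee \bigl(f^i(a)\cdot f^{n-i+1}(b)\bigr).
\]
Taking the join over $i=0,\dots,n$ and reindexing, the collection of terms on the right consists of all expressions $f^{j}(a)\cdot f^{(n+1)-j}(b)$ for $j=0,1,\dots,n+1$ (the first family covers $j=1,\dots,n+1$ and the second covers $j=0,\dots,n$). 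Therefore
\[
f^{n+1}(a\cdot b) \leq \bigvee_{j=0}^{n+1} f^{j}(a)\cdot f^{(n+1)-j}(b),
\]
completing the induction.

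The argument is essentially a semi-lattice-theoretic rewriting of the binomial/Leibniz computation, so there is no real obstacle; the only point requiring care is the bookkeeping in the reindexing step, to ensure that the two overlapping index families produced by the Leibniz inequality cover exactly the range $j=0,\dots,n+1$ once the outer join is taken.
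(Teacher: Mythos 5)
Your proof is correct and follows essentially the same route as the paper: induction on $n$, applying order-preservation and join-preservation of $f$ to the inductive hypothesis, then the Leibniz-type inequality on each summand followed by reindexing. The only cosmetic difference is that the paper steps from $n-1$ to $n$ and writes out the resulting join term by term rather than giving the index bookkeeping abstractly.
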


\begin{proof} The proof is by induction on $n$. For $n=0$,
it follows by $f^0(a\cdot b)=a\cdot b\leq a\cdot b=f^0(a)\cdot f^0(b)$.
For $n=1$, we can see that
$$f(a\cdot b)\leq (f(a)\cdot b)\vee (a\cdot f(b))$$
for each $a, b\in X$ by the definition of derivation.

Now, we can assume that
the proposition hold for $n-1$. That means
$$f^{n-1}(a\cdot b)\leq \bigvee_{i=0}^{n-1} f^i(a)\cdot f^{n-1-i}(b).$$
By Remark 2.5 and the definition of derivation, we have
$$ f^n(a\cdot b)=f(f^{n-1}(a\cdot b))\leq f(\bigvee_{i=0}^{n-1} f^i(a)\cdot f^{n-1-i}(b))\leq \bigvee_{i=0}^{n-1} f(f^i(a)\cdot f^{n-1-i}(b)).$$
So, we can see that
\begin{eqnarray*}
f^n(a\cdot b)
&\leq & \bigvee_{i=0}^{n-1} f(f^i(a)\cdot f^{n-1-i}(b))\\
&\leq & \bigvee_{i=0}^{n-1} ((f^{i+1}(a)\cdot f^{n-1-i}(b))\vee (f^{i}(a)\cdot f^{n-i}(b)))\\
&\leq&((f^{1}(a)\cdot f^{n-1}(b))\vee (a\cdot f^{n}(b)))\\
&~& \bigvee ((f^{2}(a)\cdot f^{n-2}(b))\vee (f^1(a)\cdot f^{n-1}(b)))\\
&~& \bigvee ((f^{3}(a)\cdot f^{n-3}(b))\vee (f^2(a)\cdot f^{n-2}(b)))\\
&~&  ~~~~~~~~~\vdots \\
&~& \bigvee ((f^{n}(a)\cdot b)\vee (f^{n-1}(a)\cdot f(b)))\\
&=& \bigvee_{i=0}^n f^i(a)\cdot f^{n-i}(b).
\end{eqnarray*}
Hence, we get the proof.
\end{proof}

\begin{proposition} Let $f$ be a derivation of a Jacobi commutator semi-lattice $(X, \leq, \cdot)$ bounded
above by $1_X$, let $x$ and $y$ be an elements of $X$, and let $g$ be the inner derivation of $(X, \leq, \cdot)$
defined for each $s$ in $X$ by $g(s)=x\cdot s$. If $x\leq y$ and for some positive integer $m$, $f^m(y)\leq g(x)$,
then for each positive integer $k$,
$$f^{t_k}(y)\leq g^k(x)$$
where $t_k=km+(k-1)(m-1).$
\end{proposition}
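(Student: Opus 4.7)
The plan is to proceed by induction on $k$. The base case $k=1$ is precisely the hypothesis $f^{m}(y)\leq g(x)$. For the inductive step, I would assume $f^{t_{k-1}}(y)\leq g^{k-1}(x)$; since $t_{k}-t_{k-1}=2m-1$, Remark 2.7 together with the inductive hypothesis gives
$$f^{t_{k}}(y)=f^{2m-1}(f^{t_{k-1}}(y))\leq f^{2m-1}(g^{k-1}(x)),$$
so it suffices to establish the auxiliary inequality $f^{2m-1}(g^{k-1}(x))\leq g^{k}(x)$.

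To attack this auxiliary inequality, I would write $g^{k-1}(x)=x\cdot g^{k-2}(x)$ and apply Proposition 2.9:
$$f^{2m-1}(g^{k-1}(x))\leq \bigvee_{i=0}^{2m-1} f^{i}(x)\cdot f^{2m-1-i}(g^{k-2}(x)).$$
Each summand must be bounded by $g^{k}(x)$, and I would split into cases on $i$. When $i\geq m$, combining $x\leq y$ with the hypothesis and Remark 2.7 yields $f^{i}(x)\leq f^{i-m}(f^{m}(y))\leq g(x)$; together with the trivial bound $f^{2m-1-i}(g^{k-2}(x))\leq g^{k-2}(x)$ and Proposition 2.8, the summand is controlled by $g(x)\cdot g^{k-2}(x)\leq g^{k}(x)$. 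When $i=0$, a nested strong induction---invoking the auxiliary inequality already proved at level $k-1$---gives $f^{2m-1}(g^{k-2}(x))\leq g^{k-1}(x)$, whence $x\cdot f^{2m-1}(g^{k-2}(x))\leq x\cdot g^{k-1}(x)=g^{k}(x)$.

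The principal obstacle is the intermediate range $0<i<m$: here only $f^{i}(x)\leq x$ is available and the exponent $2m-1-i\in[m,2m-1)$ is too small for the nested induction to apply directly, so the naive bound $f^{i}(x)\cdot f^{2m-1-i}(g^{k-2}(x))\leq x\cdot g^{k-2}(x)=g^{k-1}(x)$ falls short of $g^{k}(x)$ by one factor of $g$. To close this gap I would further expand $g^{k-2}(x)=x\cdot g^{k-3}(x)$ via Proposition 2.9 and then invoke the Jacobi identity (the defining property of a Jacobi commutator semi-lattice) to rearrange the resulting nested products so that the ``excess'' $f$-exponents can be gathered onto a single factor $f^{\ell}(x)$ with $\ell\geq m$, which is then converted to a factor of $g(x)$ and absorbed into $g^{k}(x)$ via Proposition 2.8 as in the case $i\geq m$. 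The delicate bookkeeping required to track the simultaneous reduction of the $f$-exponent and the nesting depth of $g$-commutators is the technical crux of the argument, and is most cleanly executed by formulating a strengthened joint inductive claim controlling $f^{N}(g^{j}(x))$ for all relevant pairs $(N,j)$, in the spirit of the Jacobi-based induction used in the proof of Proposition 2.8.
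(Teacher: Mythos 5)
There is a genuine gap, and you have correctly located it yourself: the intermediate range $0<i<m$ in your join decomposition is never actually handled. Your plan for that range (``expand $g^{k-2}(x)$ further, invoke the Jacobi identity, gather the excess $f$-exponents onto one factor'') is a statement of intent, not an argument; the same shortfall recurs at every level of the proposed nested expansion (each time you split $f^{N}(x\cdot g^{j}(x))$ by the Leibniz rule, the terms with $0<i<m$ again lose exactly one factor of $g$), so it is not clear the bookkeeping ever closes. The deeper problem is structural: after peeling off only $2m-1$ powers of $f$ and passing to $g^{k-1}(x)$ via the inductive hypothesis, the remaining budget of $f$'s is too small to upgrade every term in the Leibniz expansion, and the needed auxiliary inequality $f^{2m-1}(g^{k-1}(x))\leq g^{k}(x)$ does not follow from the inductive hypothesis (the inequality $f^{t_{k-1}}(y)\leq g^{k-1}(x)$ points the wrong way for that).

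The paper's proof avoids this by a different first move. Instead of writing $f^{t_k}=f^{2m-1}\circ f^{t_{k-1}}$ and using the inductive hypothesis immediately, write $f^{t_k}(y)\leq f^{t_k-m}(f^{m}(y))\leq f^{t_k-m}(x\cdot x)$, using the hypothesis $f^{m}(y)\leq g(x)=x\cdot x$ only once and keeping the full budget $t_k-m$ of derivations. Proposition 2.9 then gives $f^{t_k}(y)\leq\bigvee_{i=0}^{t_k-m}f^{i}(x)\cdot f^{t_k-m-i}(x)$, where both factors are powers of $f$ applied to $x\leq y$, so the strong inductive hypothesis $f^{t_r}(y)\leq g^{r}(x)$ applies to each factor separately. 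The identity $t_j+t_{k-j}=t_k-m+1$ guarantees that if $t_{j-1}\leq i<t_j$ then $t_k-m-i\geq t_{k-j}$, so every term is bounded by $g^{j-1}(x)\cdot g^{k-j}(x)\leq g^{k}(x)$ via Proposition 2.8 --- including the terms with small $i$, where the first factor contributes only $g^{0}(x)=x$ but the second factor's exponent is large enough to contribute $g^{k-1}(x)$. I recommend you rework the inductive step along these lines rather than trying to repair the $0<i<m$ case directly.
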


\begin{proof} The proof is by induction on $k$.

{\bf Step 1.} For $k=1$, we can see $t_1=m$. So we can see the condition $f^{t_k}(y)\leq g^k(x)$
makes this case holds.

{\bf Step 2.} If $k> 1$, then for $r\leq k-1$, we can assume that
the proposition hold when $r\leq k-1$. That means that
$$f^{t_r}(y)\leq g^r(x)$$
for each $1\leq r\leq k-1$.

{\bf Step 3.} For $k$, we can see that
$f^{t_k}(y)\leq f^{t_k-m}(f^m(y))\leq f^{t_k-m}(g(x))$ by Remark 2.7.
And by Proposition 2.9, we have
\begin{eqnarray*}f^{t_k}(y)
&\leq & f^{t_k-m}(f^m(y))\leq f^{t_k-m}(g(x))=f^{t_k-m}(x\cdot x)\\
&\leq & \bigvee_{i=0}^{t_k-m} f^i(x)\cdot f^{t_k-m-i}(x).
\end{eqnarray*}

Now, we will consider $ f^i(x)\cdot f^{t_k-m-i}(x)$ for each $i$.
For each $i$, there exists $1\leq j\leq k$ such that
$$2(j-1)m-m-(j-1)+1\leq i< 2jm -m -j +1.$$

{\bf For $f^i(x)$,} we can see that
$$f^i(x)\leq f^i(y)\leq f^{2(j-1)m-m-(j-1)+1}(y).$$
Here, $2(j-1)m-m-(j-1)+1=t_{j-1}$. But $j-1\leq k-1$, thus
$$f^{2(j-1)m-m-(j-1)+1}(y)=f^{t_{j-1}}(y)\leq g^{j-1}(x).$$

{\bf For $ f^{t_k-m-i}(x)$,} we can see that
$$f^{t_k-m-i}(x)\leq f^{t_k-m-i}(y).$$
But
\begin{eqnarray*}t_k-m-i
&=& 2km-k-2m+1-i-2jm- j + 2jm +j\\
&=& 2(k-j)m-(k-j)-m+1+jm-m-j-i\\
&=& t_{k-j}+jm-m-j-i.
\end{eqnarray*}
Since $i< 2jm -m -j +1$, we have $jm-m-j-i\geq 0$.
Hence, we have
\begin{eqnarray*}f^{t_k-m-i}(x)\leq f^{t_k-m-i}(y)
&=& f^{t_{k-j}+jm-m-j-i}(y)\\
&=& f^{t_{k-j}}(f^{jm-m-j-i}(y))\\
&\leq & f^{t_{k-j}}(y)\\
&\leq & g^{k-j}(x).
\end{eqnarray*}

Hence, we have
$$f^i(x)\cdot f^{t_k-m-i}(x)\leq g^{j-1}(x)\cdot g^{k-j}(x)\leq g^{k-j+j-1+1}(x)=g^k(x).$$
Therefore, we can see
\begin{eqnarray*}f^{t_k}(y)
&\leq & f^{t_k-m}(f^m(y))\leq f^{t_k-m}(g(x))=f^{t_k-m}(x\cdot x)\\
&\leq & \bigvee_{i=0}^{t_k-m} f^i(x)\cdot f^{t_k-m-i}(x)\\
&\leq & \bigvee_{i=0}^{t_k-m} g^{j-1}(x)\cdot g^{k-j}(x)\\
&\leq & \bigvee_{i=0}^{t_k-m} g^k(x) ~--------\mathrm{by ~Proposition~ 2.8}\\
&=& g^k(x)
\end{eqnarray*}
 and we prove this proposition.
\end{proof}
The above proof follows \cite[Theorem 1]{S}.

\section{\bf Semi-abelian categories}

In this section we collect some known results about semi-abelian categories.  For the background theory of semi-abelian categories,
we refer to \cite{JMT}.

%
%
%
%
%
%
%
%
%
%
%
%
%
%

We introduce the Huq commutator as follows, and we use the notations in \cite{Gr}.

\begin{definition} Let $\mathcal{C}$ be a semi-abelian category.
Denote by $0$ a zero object in $\mathcal(C)$, and denote by $0$ each zero morphism, that is,
a morphism which factors through a zero object. For each $A, B\in \mathrm{Ob}(\mathcal{C})$,
we have a product $A\times B\in \mathrm{Ob}(\mathcal{C})$. By the definition of product,
we can write  $\langle 1, 0 \rangle: A\to A\times B$ and $\langle 0, 1 \rangle: B\to A\times B$
for the unique morphisms with $\pi_1\langle 1, 0 \rangle:=1_A, \pi_2\langle 1, 0 \rangle:=0,\pi_1\langle 0, 1\rangle:=0$ and
$\pi_2\langle 0, 1\rangle:=1_B$.
A pair of morphisms $f: A\to C$ and $g: B\to C$ {\bf commute}, if there is a morphism $\varphi: A\times B\to C$
making the diagram commute.
$$\CD
  A @> \langle 1, 0 \rangle >> A\times B @> \langle 0,1 \rangle >> B \\
  @V f VV @V \varphi VV @V g VV  \\
  C @>=>> C @>=>> C
\endCD
$$

More generally the {\bf Huq commutator} of  $f: A\to C$ and $g: B\to C$ is defined
to be the smallest normal subobject $N$ of $C$ such that $qf$ and $qg$ commute, where
$q: C\to C/N$ is the cokernel of the associated normal monomorphism $N\to C$.

\end{definition}

\begin{theorem} Let $\mathcal{C}$ be a semi-abelian category. Let  $f: A\to C$ and $g: B\to C$
be morphisms in $\mathcal{C}$, then there exists the Huq commutator of $f$ and $g$.
\end{theorem}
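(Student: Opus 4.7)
The plan is to construct the Huq commutator explicitly and then verify its universal property. First, form the coproduct $A+B$ in $\mathcal{C}$ (which exists since $\mathcal{C}$ is semi-abelian, hence has finite colimits) with injections $\iota_A:A\to A+B$ and $\iota_B:B\to A+B$. Using the universal property of the coproduct, let $[f,g]:A+B\to C$ be the unique morphism with $[f,g]\comp\iota_A=f$ and $[f,g]\comp\iota_B=g$. Similarly, the morphisms $\langle 1_A,0\rangle:A\to A\times B$ and $\langle 0,1_B\rangle:B\to A\times B$ induce a canonical comparison morphism $r:A+B\to A\times B$, and I set $K:=\Ker(r)$.

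The key categorical observation, which holds in any pointed category with products and coproducts, is that a pair $h:A\to D$, $k:B\to D$ commutes (in the sense of the definition) if and only if the morphism $[h,k]:A+B\to D$ factors through $r$; in a pointed protomodular (hence semi-abelian) category, $r$ is a regular epimorphism with kernel $K$, so this is equivalent to $[h,k]\comp(K\hookrightarrow A+B)=0$. Using this, I would next take the image $I\hookrightarrow C$ of the composite $K\hookrightarrow A+B\xrightarrow{[f,g]} C$, via the regular epi–mono factorization system available in any semi-abelian category, and define $N$ to be the normal closure of $I$ in $C$; normal closures exist in semi-abelian categories.

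It then remains to check that $N$ is the Huq commutator. For existence of the commuting witness modulo $N$: let $q:C\to C/N$ be the cokernel of $N\hookrightarrow C$. Since $I\le N$, the composite $q\comp[f,g]\comp(K\hookrightarrow A+B)$ is zero, so by the equivalence above $q\comp[f,g]$ factors through $r$, yielding a morphism $\varphi:A\times B\to C/N$ with $\varphi\comp\langle 1,0\rangle=qf$ and $\varphi\comp\langle 0,1\rangle=qg$. For minimality: suppose $N'$ is any normal subobject of $C$ for which $qf$ and $qg$ commute in $C/N'$. Then by the same equivalence, the restriction to $K$ of $q'\comp[f,g]$ is zero, where $q':C\to C/N'$, hence $I\le N'$ as subobjects of $C$; since $N'$ is normal and $N$ is the normal closure of $I$, we obtain $N\le N'$.

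The main obstacle is the step identifying $r:A+B\to A\times B$ as a regular epi with kernel characterizing commuting pairs; this is a standard fact for pointed protomodular categories but requires unwinding the universal properties, and it is the cornerstone on which both the construction of $N$ and the verification of its universal property rest. The remaining difficulty is purely bookkeeping: invoking the existence of images and of normal closures in semi-abelian categories, both of which are part of the standard toolkit recalled from \cite{JMT}.
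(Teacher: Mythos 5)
Your construction is correct, and it is in fact the standard one: the paper itself offers no argument here, deferring entirely to \cite{B} and \cite{Gr}, and what you have written is essentially the proof that lives in those references. The two pivots you identify are exactly the right ones and both are standard facts about semi-abelian categories: (i) the comparison morphism $r\colon A+B\to A\times B$ is a strong epimorphism because semi-abelian categories are unital (the pair $\langle 1,0\rangle$, $\langle 0,1\rangle$ is jointly strongly epimorphic), hence a regular epimorphism since the category is regular; and (ii) every regular epimorphism in a semi-abelian category is the cokernel of its kernel, so ``$[h,k]$ factors through $r$'' is indeed equivalent to ``$[h,k]$ vanishes on $K=\Ker(r)$.'' With those two facts in hand, your verification of both halves of the universal property (existence of the witness $\varphi$ modulo $N$, and minimality via the normal closure) is complete and correct. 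The only stylistic remark is that some treatments observe that the regular image of $K$ in $C$ is already normal when $f$ and $g$ are normal monomorphisms (the case actually used later in the paper, where the commutator $[S,T]_C$ of subobjects is taken), but taking the normal closure as you do is the right move for arbitrary morphisms $f$ and $g$ and costs nothing.
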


\begin{proof} See \cite{B} or \cite{Gr}.
\end{proof}

We recall the definition of nilpotent for object in a semi-abelian category $\mathcal{C}$ as follows.
\begin{definition} For subobjects $S$ and $T$ of an object $C$
in $\mathcal{C}$,  we will write $[S, T]_C$ for the Huq commutator of the associated monomorphisms
$S\to C$ and $T\to C$. Recall also that C is nilpotent if there exists a
non-negative integer $n$ such that $\gamma^n_C(C)=0$,
 where  $\gamma_C$ is the map sending $S$ in $\mathbf{Sub}(C)$
to $[C, S]_C$ in $\mathbf{Sub}(C)$. The least such $n$ is the nilpotency class of $C$.

\end{definition}

\section{\bf Proof of the main theorem}
In this section, we give a proof of the main theorem.

\begin{theorem} Let $\mathbb{C}$ be an algebraically coherent semi-abelian category and let
$p: E\to B$ be an extension of a nilpotent object $B$ in $\mathbb{C}$. If
the kernel of $p$ is contained in the Huq commutator $[N, N]_N$ of a nilpotent normal
subobject $N$ of $E$, and if $N$ is of nilpotency class $c$ and $B$ is of nilpotency class $d$,
then $E$ is of nilpotency class at most $cd+(c-1)(d-1)$.
\end{theorem}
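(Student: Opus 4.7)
The plan is to apply Proposition~2.10 to the poset of normal subobjects of $E$ equipped with the Huq commutator as its Jacobi commutator semi-lattice structure. First I would invoke the fact --- a principal consequence of algebraic coherence, as developed in \cite{Gr} --- that $(\mathrm{NSub}(E), \leq, [-, -]_E)$ is a Jacobi commutator semi-lattice bounded above by $E$. In particular, $f := \gamma_E = [E, -]_E$ is an inner derivation, and so is $g := [N, -]_E$, which arises from the distinguished element $x := N$. Setting $y := E$, the hypothesis $x \leq y$ is immediate.

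To apply Proposition~2.10 I need a positive integer $m$ with $f^m(y) \leq g(x)$. Since $B$ is nilpotent of class $d$ we have $\gamma_B^d(B) = 0$; because $p$ is a regular epimorphism and the lower central series is stable under quotients, this lifts to $f^d(E) = \gamma_E^d(E) \leq \ker(p)$. Combining with the hypothesis $\ker(p) \leq [N, N]_N$ and the identification $[N, N]_N = [N, N]_E$ as normal subobjects of $E$ --- which is one of the commutator-identification consequences of algebraic coherence --- yields $f^d(y) \leq g(x)$. So Proposition~2.10 applies with $m = d$.

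By Proposition~2.10, for every positive integer $k$,
\[
  f^{t_k}(y) \leq g^k(x), \qquad t_k = kd + (k-1)(d-1).
\]
Taking $k = c$ and using that $N$ is of nilpotency class $c$, I would identify $g^c(N)$ with $\gamma_N^c(N) = 0$, invoking once more the algebraic-coherence identification $[N, S]_N = [N, S]_E$ for $S \leq N$ so that iterating $g = [N, -]_E$ recovers the internal lower central series of $N$. This gives $\gamma_E^{t_c}(E) = f^{t_c}(E) = 0$, so $E$ is nilpotent of class at most $t_c = cd + (c-1)(d-1)$, as required.

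The main obstacle --- and the only step that genuinely uses algebraic coherence rather than only the semi-abelian framework --- is verifying these commutator-identification properties: that $(\mathrm{NSub}(E), \leq, [-, -]_E)$ is indeed a Jacobi commutator semi-lattice (so that $f$ and $g$ really are derivations and the Jacobi inequality of Definition~2.4(a) holds for $[-,-]_E$), and that $[N, -]_E$ restricts compatibly to $[N, -]_N$ on normal subobjects of $N$. Once those identifications are in hand, the rest is a clean instantiation of Proposition~2.10, paralleling the group-theoretic argument of \cite[Theorem~1]{S}.
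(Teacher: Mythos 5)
Your proposal is correct and follows essentially the same route as the paper's proof: both work in $X=\mathbf{NSub}(E)$ with $f=[E,-]_E$ and $g=[N,-]_E$, establish $f^d(E)\leq g(N)$ (the paper simply cites the proof of \cite[Theorem 3.4]{Gr} for this step, which you unpack correctly via $f^d(E)\leq\ker p\leq[N,N]_N=[N,N]_E$), and then conclude by Proposition 2.10 with $m=d$, $k=c$ together with $g^c(N)=0$. Your explicit flagging of the algebraic-coherence inputs (the Jacobi semi-lattice structure and the identification $[N,S]_N=[N,S]_E$) is exactly what the paper leaves implicit by deferring to \cite{Gr}.
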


\begin{proof} Let $X:= \mathbf{NSub}(E)$(Here, $\mathbf{NSub}(E)$ means all normal subobjects of $E$) and let 
$f: X\to X$ and $g: X\to X$ be the maps defined by 
$f(K)=[E, K]_E$ and $g(K)=[N, K]_E$. Using the proof of \cite[Theorem 3.4]{Gr},
we find $f^d(E)\leq g(N)$. So by Proposition 2.10, we get
$$f^{cd+(c-1)(d-1)}(E)\leq g^c(N).$$
So, we get the proof because $N$ is of nilpotency class $c$.
\end{proof}

\begin{example}\cite[Section 5, Example]{S} For every pair $c, d$ of positive integers there is a group
$G$ of class $cd+(c-1)(d-1)$ which has a normal subgroup $N$ of class $c$ such that $G/[N,N]$ is of
class $d$.
\end{example}

\textbf{ACKNOWLEDGMENTS}\hfil\break

The authors are grateful to the website address: ncatlab.org/nlab/show/HomePage for its guidance.

\end{document}